\theoremstyle{plain}
  \newtheorem{thm}{Theorem}[section]
  \newtheorem{lem}[thm]{Lemma}
  \newtheorem{cor}[thm]{Corollary}
  \newtheorem{prop}[thm]{Proposition}
  \newtheorem*{cornt}{Corollary~\ref{cor:nottrivial}}
  \newtheorem*{propinf}{Proposition~\ref{prop:infinite}}
\theoremstyle{definition}
  \newtheorem{ex}[thm]{Example}
\theoremstyle{remark}
  \newtheorem{rem}[thm]{Remark}
  \newtheorem*{ack}{Acknowledgments}
\newcommand{\Z}{\mathbb{Z}}
\newcommand{\cyclic}[1]{\Z/#1\Z}
\newcommand{\cyclicproduct}[2]{(\cyclic{#1})^{#2}}
\newcommand{\cover}[1]{e(#1)}
\newcommand{\lift}[1]{l(#1)}
\newcommand{\trivial}{\emptyset}
\newcommand{\cvector}[1]{\vec{#1}}
\newcommand{\svector}[2]{\vec{#1}_{#2}}
\newcommand{\gpset}[1]{\mathcal{GP}(#1)}
\newcommand{\gpmset}[1]{\mathcal{GPM}(#1)}
\DeclareMathOperator{\height}{height}
\DeclareMathOperator{\base}{base}
\numberwithin{equation}{section}
\begin{document}
\title{Homotopy invariants of Gauss words}
\author{Andrew Gibson}
\address{
Department of Mathematics,
Tokyo Institute of Technology,
Oh-okayama, Meguro, Tokyo 152-8551, Japan
}
\email{gibson@math.titech.ac.jp}
\date{\today}
\begin{abstract}
By defining combinatorial moves, we can define an equivalence relation
 on Gauss words called homotopy.
In this paper we define a homotopy invariant of Gauss words.
We use this to show that there exist Gauss words that are not
 homotopically equivalent to the empty Gauss word, disproving a
 conjecture by Turaev.
In fact, we show that there are an infinite number of equivalence
 classes of Gauss words under homotopy. 
\end{abstract}
\keywords{Gauss words, nanowords, homotopy invariant}
\subjclass[2000]{Primary 57M99; Secondary 68R15}
\thanks{This work was supported by a Scholarship from the Ministry of
Education, Culture, Sports, Science and Technology of Japan.} 
\maketitle
\section{Introduction}
A Gauss word is a sequence of letters with the condition that any letter
appearing in the sequence does so exactly twice. 
A Gauss word can be obtained from an oriented virtual knot
diagram. Given a diagram, label the real crossings and arbitrarily pick
a base point on the curve somewhere away from any of the real
crossings. Starting from the base point, we follow the curve and read off
the labels of the crossings as we pass through them. When we return to
the base point we will have a sequence of letters in which each label of
a real crossing appears exactly twice. Thus this sequence is a Gauss
word.
\par
If we give the real crossings different labels we will get a different
Gauss word from the diagram.
We wish to consider all such Gauss words equivalent, so we introduce the
idea of an isomorphism Gauss words.
Two Gauss words are isomorphic if there is a bijection between the sets
of letters in the Gauss words, which transforms one Gauss word into the
other.
Diagrammatically, an isomorphism corresponds to relabelling the real
crossings.
\par
Depending on where we introduce the base point, we may get different
Gauss words from a single diagram.
To remove this dependence we introduce a combinatorial move on a Gauss
word.
The move allows us to remove the initial letter from the word and
append it to the end of the word.
We call this move a shift move.
Diagrammatically, the shift move corresponds to the base point being
moved along the curve through a single real crossing.
We can then say that, modulo the shift move and isomorphism, the
representation of an oriented virtual knot diagram by a Gauss word is
unique. 
\par
We can define an equivalence relation on virtual knot diagrams by
defining diagrammatic moves called generalized Reidemeister moves. Two
diagrams are defined to be equivalent if there exists a finite sequence
of such moves transforming one diagram into the other. Virtual knots are
defined to be the equivalence classes of this relation.
\par
By analogy, we can define combinatorial moves on Gauss words which
correspond to the generalized Reidemeister moves. We can then use these
moves, the shift move and isomorphism to define an equivalence relation
on Gauss words which we call homotopy.
We define the moves in such a way that if two virtual knot diagrams
represent the same virtual knot then the Gauss words obtained from the
diagrams must be equivalent under our combinatorial moves. This means
that if we have two virtual knot diagrams for which the associated Gauss
words are not equivalent, we can immediately say that the diagrams
represent different virtual knots.
\par
If we disallow the shift move, we can define another kind of homotopy of
Gauss words which we call open homotopy.
It is clear from the definition that if two Gauss words are open
homotopic, they must also be homotopic.
In this paper we will show that the opposite conclusion is not
necessarily true.
In other words, we will show that homotopy and open homotopy of Gauss
words are different.
\par
In \cite{Turaev:Words}, Turaev introduced the idea of
nanowords which are defined as Gauss words with some associated data.
By introducing moves on nanowords, different kinds of homotopy can be
defined. 
From this viewpoint, homotopy of Gauss words is the simplest kind of
nanoword homotopy.
In fact, any Gauss word homotopy invariant is an invariant for any
kind of homotopy of nanowords.
\par
In \cite{Turaev:Words}, Turaev defined several invariants of nanowords.
However, all these invariants are trivial in the case of Gauss words.
This led Turaev to conjecture that open homotopy of Gauss words is
trivial.  
That is, he conjectured that every Gauss word is open homotopically
equivalent to the empty Gauss word.
\par
In this paper we define a homotopy invariant of Gauss words called $z$
which takes values in an abelian group.
This invariant was inspired by Henrich's smoothing invariant for virtual
knots defined in \cite{Henrich:vknots}.
In fact, our invariant can be viewed as a version of Henrich's
invariant, weakened sufficiently to remain invariant under homotopy of
Gauss words.
\par
We give an example of a Gauss word for which $z$ takes a different value
to that of the empty Gauss word.
This shows that Turaev's conjecture is false.
We state this result as follows.
\begin{cornt}
There exist Gauss words that are not homotopically trivial.
\end{cornt}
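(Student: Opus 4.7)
The plan is to follow exactly the strategy sketched in the introduction: build a homotopy invariant $z$ of Gauss words whose target is an abelian group, verify it is unchanged under each of the elementary moves generating homotopy, and then exhibit a single Gauss word on which $z$ differs from its value on the empty word. Since the conclusion is an existence statement, producing one concrete example is enough, so the emphasis is on constructing a sufficiently sensitive invariant rather than classifying nontrivial words.

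First I would define $z$ word-by-word, taking a cue from Henrich's smoothing construction. For a Gauss word $w$ and each letter $x$ of $w$, I would associate a local "linking datum" $\link{x}{w}$ recording how $x$ is interleaved with the other letters (for instance, the parity of the number of letters whose two occurrences separate the two occurrences of $x$, or a refinement thereof that remembers some ordering or sign). Summing these contributions over all letters of $w$, interpreted in a suitable abelian group $A$, yields the value $z(w)\in A$. The group $A$ has to be chosen carefully: it must be large enough to detect the difference between the example word and the empty word, yet small enough that all the relations forced by the Reidemeister-type moves are killed. In practice one expects $A$ to be a quotient of a free abelian group by an explicit list of relations, one for each move.

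The main bulk of the proof will be the invariance check. I would verify in turn that $z$ is unaffected by (a) isomorphism of Gauss words (immediate from the symmetric definition), (b) the shift move that cycles the first letter to the end (here the local linking data are unchanged for every letter, so this is a direct calculation), and (c) each of the combinatorial analogues of the generalized Reidemeister moves. For the Reidemeister-type moves, the new letter(s) introduced contribute a computable amount to $z$, and the remaining letters have their linking data altered in a controlled way; the relations defining $A$ are exactly engineered to make these net changes vanish. This step-by-step verification is the one real obstacle, because the invariant must survive every move simultaneously, and it is tempting to define something too strong that fails one of the moves.

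Having established invariance, I would finish by computing $z(\trivial)=0$ trivially (the sum is empty) and then exhibiting a specific short Gauss word $w_0$ — likely of length $6$ or $8$, small enough to compute by hand but with a nontrivial interleaving pattern — for which the sum of the local linking data lands on a nonzero element of $A$. Since $z$ is a homotopy invariant and $z(w_0)\neq z(\trivial)$, the word $w_0$ cannot be homotopic to the empty Gauss word, which proves the corollary. The remaining statement in the introduction, that there are infinitely many homotopy classes, would then follow by a similar argument with a family of examples on which $z$ takes infinitely many distinct values, but that is beyond the corollary at hand.
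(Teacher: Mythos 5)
Your outline reproduces the paper's strategy at the level of a plan (define an invariant $z$ as a sum of per-letter local data in an abelian group, check invariance move by move, evaluate on one example), but the entire mathematical content is deferred to choices you never make, and the one concrete candidate you float would not work. The paper's per-letter datum for a letter $A$ in $w=xAyAz$ is not a numerical linking parity but the \emph{unordered homotopy class of the two-component Gauss phrase} $y|xz$, viewed as a generator of the free abelian group on $\gpset{2}$ reduced modulo $2$. Each of these design decisions is forced by a specific move: the H2 move makes the two deleted letters contribute equal classes, so their net contribution is twice a generator and one must pass to $G/2G$; the shift move applied at the distinguished letter itself turns $x|y$ into $y|x$, so one must quotient by permutation of components (unordered homotopy); and the H1 move forces the normalization by subtracting $t(w)$, the class of $\trivial|w$. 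Your sketch identifies none of these constraints, and a datum like ``the parity of the number of letters separating the two occurrences of $x$'' is far too weak: the sum of those parities over all letters of a Gauss word is always even, and no such single residue survives the moves while detecting anything.

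Two further gaps. First, even after the invariant is defined, showing $z(w_0)\neq 0$ requires an effective way to distinguish unordered homotopy classes of two-component Gauss phrases; the paper does this with the previously established $S$ invariant (a pair of matrices), computed for $p(w,D)$ versus $\trivial|w$. Without some such computable invariant of Gauss phrases your final step cannot be carried out. Second, your guess that a word of length $6$ or $8$ suffices is likely wrong for closed homotopy: the paper's witness is $ABACDCEBED$, of rank $5$ (length $10$), and four of its five per-letter phrases turn out to be mutually homotopic, so the nonvanishing of $z$ hinges on the single class $u(w,D)$ differing from $t(w)$ --- a computation that only works out for a word with a sufficiently rich interleaving pattern. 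In short, the approach is the right one, but as written the proposal contains no invariant, no invariance proof, and no verified example.
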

Using the idea of a covering of a Gauss word, which was originally
introduced by Turaev in \cite{Turaev:Words}, we define the height of a
Gauss word.
This is a homotopy invariant and we use it to prove the following
proposition. 
\begin{propinf}
There are an infinite number of homotopy classes of Gauss words.
\end{propinf}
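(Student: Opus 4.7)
The plan is to exploit the height invariant introduced just above the proposition statement. Since the height is a homotopy invariant of Gauss words, it suffices to exhibit a sequence $w_1, w_2, \ldots$ of Gauss words whose heights realize infinitely many distinct values; the $w_n$ must then lie in infinitely many different homotopy classes.

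I would begin from the Gauss word $w$ of Corollary~\ref{cor:nottrivial}, on which $z$ takes a nontrivial value; the height of $w$ is consequently positive, because height is built from the behaviour of $z$ on coverings. Using the covering construction $\cover{\cdot}$ introduced by Turaev, from which $\height$ is defined, I would form a family $w_n$ of Gauss words --- for instance by iterating the covering construction, by taking $n$-cables $\cable{w}{n}$, or by forming connected sums of $n$ copies of $w$ together with bookkeeping letters that push a nontrivial contribution to progressively higher coverings. The core claim is then that $\height(w_n)$ grows unboundedly in $n$. Once this is in place, the pigeonhole argument gives infinitely many homotopy classes and completes the proof.

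The main obstacle is precisely establishing that $\height(w_n) \to \infty$. This requires analyzing how $z$ interacts with the covering operation on the seed word $w$ and showing that nontriviality of $z$ persists on arbitrarily high-order coverings of $w_n$, so that each additional level of covering contributes a strictly larger admissible index to the supremum defining $\height$. Everything else in the argument --- that height is invariant under homotopy, and that distinct height values force distinct homotopy classes --- is either set up before the proposition statement or is immediate, so the work really reduces to exhibiting a single explicit family of Gauss words whose heights can be bounded from below by a function tending to infinity.
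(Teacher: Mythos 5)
There is a genuine gap: you have correctly identified height as the invariant that will separate infinitely many classes, but you never produce a family on which it is unbounded, and the one concrete mechanism you name --- ``iterating the covering construction'' --- goes in the wrong direction. Iterated coverings strictly \emph{decrease} the rank until they stabilize (that is precisely how $\height$ is defined, via Lemma~\ref{lem:reduce}), so they can never yield words of growing height. The cables and connected sums you mention as alternatives are not defined in the paper and would each require their own invariance analysis, so they do not close the gap either. The missing idea is the \emph{inverse} of the covering: the lift $\lift{w}$, obtained by replacing the first occurrence of each odd-parity letter $A$ of $w$ by $XAX$ for a fresh letter $X$. This makes every original letter even-parity and every new letter odd-parity, so $\cover{\lift{w}}=w$ exactly. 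Setting $w_0=w$ and $w_{i}=\lift{w_{i-1}}$, Lemma~\ref{lem:infinite-family} shows that if $w\not\simeq\cover{w}$ then no $w_{i+1}$ is homotopic to $w_i$ (otherwise taking coverings $i+1$ times would force $w\simeq\cover{w}$), whence $\height(w_i)=\height(w)+i$ and the $w_i$ are pairwise non-homotopic.

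A secondary inaccuracy: you assert that $\height(w)>0$ ``because height is built from the behaviour of $z$ on coverings.'' Height is defined purely from the covering operation and homotopy, with no reference to $z$; what is actually needed is the hypothesis $w\not\simeq\cover{w}$. For the seed word $w=ABACDCEBED$ this is verified by computing $\cover{w}=DD\simeq\trivial$ directly and invoking the $z$-computation of Example~\ref{ex:counter} to see that $w\not\simeq\trivial$. No analysis of how $z$ interacts with coverings of the $w_n$ is required once the lift construction is in hand --- which is why the step you flag as ``the main obstacle'' is in fact the entire content of the proof, and it remains unestablished in your proposal.
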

\par
We also give an invariant for open homotopy of Gauss words called
$z_o$ which is defined in a similar way to $z$.
The invariant $z$ itself can be viewed as an open homotopy invariant.
However, we show that $z_o$ is a stronger invariant than $z$.
\par
The rest of this paper is arranged as follows.
In Section~\ref{sec:gausswords} we give a formal definition of Gauss
words, homotopy and open homotopy. 
In Section~\ref{sec:gaussphrases} we describe Gauss phrases and recall the
$S$ invariant which was defined in \cite{Gibson:gauss-phrase}.
\par
In Section~\ref{sec:invariant} we give the definition of $z$ and prove
its invariance under homotopy. 
We then use $z$ to give an example of a Gauss word that is not
homotopically trivial. 
In Section~\ref{sec:coverings} we recall the definition of the covering
invariant from \cite{Turaev:Words}. We use this invariant to
show how we can construct infinite families of Gauss words which are
mutually non-homotopic.
\par
In Section~\ref{sec:openhomotopy} we describe the open homotopy
invariant $z_o$.
We use this invariant to show that open homotopy is different from
homotopy. 
\par
In Section~\ref{sec:virtualknots} we interpret the existence of Gauss
words that are not homotopically trivial in terms of moves on virtual
knot diagrams.
\par
Having written this paper, we discovered a paper by Manturov
\cite{Manturov:freeknots} which studies objects called free knots.
From Lemma 1 in Manturov's paper and the discussion in
Section~\ref{sec:virtualknots} it is clear that an oriented free knot is
equivalent to a homotopy class of Gauss words.
Manturov shows the existence of non-trivial free knots which implies the
result we give in Corollary~\ref{cor:nottrivial}.
Our Propostion~\ref{prop:infinite} can be deduced from his results.
In Manturov's paper objects corresponding to open homotopy classes of
Gauss words are not considered.
However, it is clear that his invariant can be generalized to this case.
\par
We also found a second paper on free knots written by Manturov
\cite{Manturov:freeknotslinks}.
In this paper he defines an invariant which is essentially the same as
our $z$ invariant.
\begin{ack}
The author would like to thank his supervisor, Hitoshi Murakami, for all
 his help and advice.
\end{ack}
\section{Gauss words}\label{sec:gausswords}
An \emph{alphabet} is a finite set and its elements are
called \emph{letters}.
A \emph{word} on an alphabet $\mathcal{A}$ is a map $m$ from an ordered
finite set $\{1,\dotsc,n\}$ to $\mathcal{A}$. 
Here $n$ is called the \emph{length} of the word.
It is a non-negative integer.
We usually write a word as sequence of letters $m(1)m(2)\dotso m(n)$
from which the map can be deduced if needed.
For example, $ABBB$ and $CABAACA$ are both words on the alphabet
$\{A,B,C\}$.
For any alphabet there is a unique empty word of length $0$ which we
write $\trivial$.
\par
A \emph{Gauss word} on an alphabet $\mathcal{A}$ is a word on
$\mathcal{A}$ such that every letter in $\mathcal{A}$ appears in the
word exactly twice.
We define the \emph{rank} of a Gauss word to be the size of
$\mathcal{A}$.
This means that the rank of a Gauss word is always half its length.
\par
For example, $ABAB$ and $ABBA$ are both Gauss words on the alphabet
$\{A,B\}$. They both have length $4$ and rank $2$.
\par
There is a unique trivial Gauss word $\trivial$ on an empty alphabet
which has length and rank $0$.
\par
By definition, the alphabet that a Gauss word is defined on is the set
of letters appearing in the Gauss word.
Therefore, we do not need to explicitly state the alphabet that the
Gauss word is defined on.
\par
Let $u$ be a Gauss word on $\mathcal{A}$ and $v$ be a Gauss word on
$\mathcal{B}$.
An \emph{isomorphism} of Gauss words $u$ and $v$ is a bijection $f$ from
$\mathcal{A}$ to $\mathcal{B}$ such that $f$ applied letterwise to $u$
gives $v$. 
If such an isomorphism exists we say that $u$ and $v$ are isomorphic.
\par
We now define some combinatorial moves on Gauss words.
If we have a Gauss word matching the pattern on the left of the move we
may transform it to the pattern on the right or vice-versa.
In each move $t$, $x$, $y$ and $z$ represent possibly empty, arbitrary
sequences of letters and $A$, $B$ and $C$ represent individual letters.
The moves are
\begin{equation*}
\begin{array}{ll}
\textrm{Shift:}\quad & AxAy \longleftrightarrow xAyA, \\
\textrm{H1:}\quad & xAAy \longleftrightarrow xy, \\
\textrm{H2:}\quad & xAByBAz \longleftrightarrow xyz, \\
\textrm{H3:}\quad & xAByACzBCt \longleftrightarrow xBAyCAzCBt \\
\end{array}
\end{equation*}
and are collectively known as \emph{homotopy moves} and were originally
defined by Turaev in \cite{Turaev:Words}.
\par
Two Gauss words are \emph{homotopic} if there exists a finite sequence
of isomorphisms and homotopy moves which transforms one into the other.
This relation is an equivalence relation which we call \emph{homotopy}.
It divides the set of Gauss words into \emph{homotopy classes}.
We define the \emph{homotopy rank} of a Gauss word $w$ to be the minimum
rank of all the Gauss words that are homotopic to $w$.
We say that a Gauss word is \emph{homotopically trivial} if it is
homotopic to the trivial Gauss word $\trivial$.
Such a Gauss word has homotopy rank $0$.
\par
If we disallow the shift move, we get a potentially different kind of
homotopy which we call \emph{open homotopy}. 
It is easy to see that if two Gauss words are open homotopic, they must
be homotopic.
We will show later that the reverse is not necessarily true.
\par
Homotopy of Gauss words is the simplest kind of homotopy of nanowords.
Turaev defined nanowords in \cite{Turaev:Words}.
A nanoword is a Gauss word with a map, called a \emph{projection}, from
its alphabet to some set $\alpha$.
An isomorphism of nanowords is an isomorphism of Gauss words which
preserves this projection.
A particular homotopy of nanowords is determined by fixing $\alpha$ and
specifying some other information known collectively as \emph{homotopy
data} (see \cite{Turaev:Words} for full details). 
Moves on nanowords are defined in the same way as moves on Gauss words.
However, restrictions dependent on the projection, homotopy data and
$\alpha$ limit when the moves can be applied.
\par
Homotopy on nanowords is defined analogously to homotopy of Gauss
words.
That is, two nanowords are homotopic if there exists a finite sequence
of isomorphisms and homotopy moves transforming one nanoword into the
other. 
In \cite{Turaev:Words}, Turaev defines homotopy of nanowords
without allowing the shift move.
In this paper we call this kind of homotopy \emph{open homotopy} of
nanowords. 
\par
In this general setting, homotopy of Gauss words is a homotopy of
nanowords where the set $\alpha$ is a single element.  
\par
In \cite{Turaev:Words}, Turaev derived some other moves from the
homotopy moves H1, H2 and H3 for nanowords. These hold for Gauss words
and are
\begin{equation*}
\begin{array}{ll}
\textrm{H2a:}\quad & xAByABz \longleftrightarrow xyz \\
\textrm{H3a:}\quad & xAByCAzBCt \longleftrightarrow xBAyACzCBt \\
\textrm{H3b:}\quad & xAByCAzCBt \longleftrightarrow xBAyACzBCt \\
\textrm{H3c:}\quad & xAByACzCBt \longleftrightarrow xBAyCAzBCt. \\
\end{array}
\end{equation*}
\section{Gauss phrases}\label{sec:gaussphrases}
A \emph{phrase} is a finite sequence of words $w_1,\dotsc,w_n$ on some
alphabet. 
We call each word in the sequence a \emph{component} of the phrase.
If the concatenation $w_1\dotso w_n$ of all words in a phrase gives a
Gauss word, we say that the phrase is a \emph{Gauss phrase}.
A Gauss phrase with only one component is necessarily a Gauss word.
\par
In this paper we write Gauss phrases as a sequence of letters, using a
$|$ to separate components.
So, for example, $ABA|B$ is a Gauss phrase written in this way.
\par
Let $p$ and $q$ be Gauss phrases with $n$ components.
We write $p$ as $u_1|\dotso|u_n$ and $q$ as $v_1|\dotso|v_n$.
Then $p$ and $q$ are isomorphic if there exists a bijection $f$ from
the alphabet of $p$ to the alphabet of $q$ such that $f$ applied
letterwise to $u_i$ gives $v_i$ for all $i$. 
\par
We define the homotopy moves H1, H2 and H3 for Gauss phrases in the same
way as we did for Gauss words.
We modify the meaning of the letters $t$, $x$, $y$ and $z$ in these
moves to allow for the inclusion of one or more occurences of the
component separator $|$.
Note that a move cannot be applied if the component separator $|$
appears between the letters in the subwords $AA$, $AB$, $BA$, $AB$, $AC$
and $BC$ that are explicitly shown in the moves.
For example, given the Gauss phrase $AB|BAC|C$, we may apply the move H2
to get the Gauss phrase $\trivial|C|C$, but we cannot apply the move H1
to remove the letter $C$.
\par
We define a shift move for Gauss phrases which can be applied to a
single component of the Gauss phrase. 
Suppose $p$ is a Gauss phrase with $i$th component of the form $Ax$ for
some letter $A$ and some letter sequence $x$.
The shift move applied to the $i$th component of $p$ gives a Gauss
phrase $q$ where the $i$th component has the form $xA$ and every other
component of $q$ matches that of $p$.
\par
We say two Gauss phrases are \emph{homotopic} if there is a finite sequence of
isomorphisms, shift moves and the moves H1, H2 and H3
 which transform one Gauss phrase into the other.
This relation is an equivalence relation on Gauss phrases called
\emph{homotopy}.
\par
None of the moves on Gauss phrases allows a component to be added or
removed. 
Thus, the number of components of a Gauss phrase is a homotopy
invariant.
As Gauss words are one component Gauss phrases, we can see that homotopy
of Gauss phrases is a generalization of the homotopy of Gauss words.
\par
As we did for Gauss words, we can define \emph{open homotopy} of Gauss
phrases by disallowing the shift move.
In fact we can define various kinds of homotopy on $n$-component Gauss
phrases by only allowing shift moves on a subset of the components.
A component for which the shift move is permitted is called
\emph{closed} and one for which the shift move is not permitted is
called \emph{open}.
Thus under homotopy of Gauss phrases all components are closed and under
open homotopy all components are open.
In this paper we use the term \emph{mixed homotopy} to mean the homotopy
on $2$-component Gauss phrases where the first component is closed and
the second one is open. 
\par
By allowing permutations of components of a Gauss phrase we can define
another kind of homotopy.
In this paper we only consider this kind of homotopy when all the
components are closed.
We call this homotopy \emph{unordered homotopy}.
\par
We studied homotopy of Gauss phrases in \cite{Gibson:gauss-phrase}. 
In that paper we defined a homotopy invariant of Gauss phrases called
the $S$ invariant.
We recall the definition here.
\par
Let $p$ be an $n$-component Gauss phrase.
We write $K_n$ for $\cyclicproduct{2}{n}$.
\par
Given a vector $\cvector{v}$ in $K_n$ we can define a map
$c_{\cvector{v}}$ from $K_n$ to itself as follows
\begin{equation*}
c_{\cvector{v}}(\cvector{x}) = \cvector{v} - \cvector{x} \mod 2.
\end{equation*}
In \cite{Gibson:gauss-phrase} we showed that $c_{\cvector{v}}$ is either
the identity map or an involution.
This means that the orbits of $K_n$ under $c_{\cvector{v}}$ all contain at
most two elements.
We define $K(\cvector{v})$ to be the set of orbits of $K_n$ under
$c_{\cvector{v}}$.
\par
For any subword $u$ of a single component in $p$ we define the
\emph{linking vector} of $u$ to be a vector $\cvector{v}$ in $K_n$.
The $i$th element of 
$\cvector{v}$ is defined to be, modulo 2, the number of letters that
appear once in $u$ and for which the other occurence appears in the
$i$th component of $p$.
\par
Let $w_k$ be the $k$th component of $p$. As $w_k$ can be considered a
subword of the $k$th component, we define the linking vector of the
$k$th component to be the linking vector of $w_k$.
We write this vector $\svector{l}{k}$.
\par
For any letter $A$ that appears twice in the same component of $p$, that
component must have the form $xAyAz$ for some, possibly empty, arbitrary
sequences of letters $x$, $y$ and $z$.
We define the linking vector of $A$ to be the linking vector of the
subword $y$.
\par
Write $[\cvector{0}]$ for the orbit of $\cvector{0}$ in
$K(\svector{l}{k})$.
For the $k$th component in $p$ we define a subset $O_k(p)$ of 
$K(\svector{l}{k})-\{[\cvector{0}]\}$ as follows.
Let $A_k$ be the set of letters which appear twice in the $k$th
component of $p$.
Then an orbit $v$ in $K(\svector{l}{k})-\{[\cvector{0}]\}$ is in $O_k(p)$
if there are an odd number of letters in $A_k$ for which the
linking vector of the letter is in $v$.
\par
We define $S_k(p)$ to be the pair $(\svector{l}{k}, O_k(p))$.
We then define $S$ to be the $n$-tuple where the $k$th element of $S$ is
$S_k(p)$.
In \cite{Gibson:gauss-phrase} we showed that $S$ is a homotopy
invariant of $p$.
\par
We can represent $S_k(p)$ as a matrix.
To do this, we first define an order on $K_n$ as follows.
Let $\cvector{u}$ and $\cvector{v}$ be vectors in $K_n$.
Let $j$ be the smallest integer for which the $j$th elements of
$\cvector{u}$ and $\cvector{v}$ differ.
Then $\cvector{u}$ is smaller than $\cvector{v}$ if the $j$th element of
$\cvector{u}$ is $0$.
\par
We define the \emph{representative vector} of an orbit $v$ in
$K(\svector{l}{k})$ to be the smallest vector in that orbit.
Let $R$ be the set of representative vectors of orbits which are in
$O_k(p)$.
Let $r$ be the number of vectors in $R$.
\par
We construct a matrix with $n$ columns and $r+1$ rows for $S_k(p)$.
The first row is the vector $\svector{l}{k}$.
The remaining $r$ rows are given by the elements of $R$ written out in
ascending order.
In \cite{Gibson:gauss-phrase} we observed that this construction is
canonical in the following sense.
Given $n$-component Gauss phrases $p$ and $q$, $S_k(p)$ and $S_k(q)$ are
equivalent if and only if their matrix representations are equal. 
Thus we can write $S$ as an $n$-tuple of matrices.
\par
As preparation for Section~\ref{sec:invariant}, we consider what happens
to the invariant $S$ under unordered homotopy of $2$-component Gauss
phrases.
\par 
Let $p$ be a $2$-component Gauss phrase.
Then $p$ has the form $w_1|w_2$.
Define $q$ to be the $2$-component Gauss phrase $w_2|w_1$.
Under unordered homotopy, $p$ and $q$ are equivalent.
We compare $S(p)$ and $S(q)$.
\par
We can write $S(p)$ as a pair of matrices $(M_1,M_2)$ where both $M_1$
and $M_2$ have $2$ columns.
Given a two column matrix $M$ we define a new matrix $T(M)$ by
\begin{equation*}
T(M) = M 
\begin{pmatrix}
0 & 1 \\
1 & 0 \\
\end{pmatrix}.
\end{equation*}
The matrix $T(M)$ is the same size as $M$ but has its columns in the
opposite order.
Using this notation it is easy to check that we can write $S(q)$ as
$(T(M_2),T(M_1))$.
We say that $S(p)$ and $S(q)$ are related by a \emph{transposition}.
If we consider the invariant $S$ modulo transposition, we get an
unordered homotopy invariant of $2$-component Gauss phrases.
\begin{rem}
It is possible to define a similar equivalence relation on the $S$
 invariant to obtain an unordered homotopy invariant in the general
 $n$-component case.
However, as we do not need to use such an invariant in this paper, we do
 not give a definition here.
\end{rem}
\par
In \cite{Gibson:gauss-phrase} we also defined an open homotopy invariant
of Gauss phrases similar to $S$ which we called $S_o$.
We can construct a hybrid of the two invariants, called $S_m$, which is
a mixed homotopy invariant of $2$-component Gauss phrases.
Recall that under mixed homotopy the first component is closed and the
second component is open.
We now give a definition of $S_m$.
\par
Given a $2$-component Gauss phrase $p$, we define $S_m$ to be the pair
of pairs $((\svector{l}{1},O_1(p)),(\svector{l}{2},B_2(p)))$ where
$\svector{l}{1}$, $\svector{l}{2}$ and $O_1(p)$ are 
defined as for $S$.
We define $B_2(p)$ to be a subset of $K_n-\{\cvector{0}\}$ as follows.
Let $A_2$ be the set of letters which appear twice in the second
component.
Then a vector $\cvector{v}$ in $K_n-\{\cvector{0}\}$
is in $B_2(p)$ if there are an odd number of 
letters in $A_2$ for which the
linking vector of the letter is $\cvector{v}$. 
\par
With reference to \cite{Gibson:gauss-phrase} it is easy to check that
$S_m$ is a mixed homotopy invariant. 
We will use $S_m$ in Section~\ref{sec:openhomotopy}.
\section{Homotopy invariant}\label{sec:invariant}
Let $\gpset{2}$ be the set of equivalence classes of $2$-component Gauss
phrases under unordered homotopy.
Let $G$ be the free abelian group generated by $\gpset{2}$.
We then define $Z$ to be $G/2G$.
Let $\theta$ be the natural homomorphism from $G$ to $Z$.
\par
Let $w$ be a Gauss word. 
For each letter $A$ in $w$ we can derive a $2$-component Gauss phrase
$p(w,A)$ as follows.
As $A$ must appear twice in $w$, $w$ has the form $xAyAz$ for some,
possibly empty, sequences of letters $x$, $y$ and $z$.
Then $p(w,A)$ is the Gauss phrase $y|xz$.
We then define $u(w,A)$ to be the unordered homotopy class of $p(w,A)$,
an element in $\gpset{2}$.
We define $t(w)$ to be the element in $\gpset{2}$ given by the unordered
homotopy class of $\trivial|w$. 
\par
We define a map $g$ from the set of Gauss words to $G$ as follows.
For each Gauss word $w$, $g(w)$ is given by
\begin{equation*}
g(w) = \sum_{A \in w} \left( u(w,A) - t(w) \right).
\end{equation*}
Then $z(w)$, defined to be $\theta(g(w))$, gives a map $z$
from the set of Gauss words to $Z$.
\par
We have the following theorem.
\begin{thm}\label{thm:invariant_z}
The map $z$ is a homotopy invariant of Gauss words.
\end{thm}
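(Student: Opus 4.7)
The plan is to verify invariance of $z$ under each type of homotopy move separately: isomorphism, shift, H1, H2, and H3. In each case I aim to show $g(w)-g(w')\in 2G$, so that $z(w)=\theta(g(w))=\theta(g(w'))=z(w')$ in $Z=G/2G$. Invariance under isomorphism is immediate, since $g$ depends only on the combinatorial structure of $w$, not on the actual letter names.

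For the shift move $w=AxAy\leftrightarrow xAyA=w'$ and for H1 ($w=xAAy\leftrightarrow xy=w'$), I expect the stronger statement $g(w)=g(w')$ to hold already in $G$. In both cases $t(w)=t(w')$, established by performing the same move in the second component of $\trivial|w$. For the shift, each pair $p(w,B)$, $p(w',B)$ differs at most by a shift inside one component, so $u(w,B)=u(w',B)$. For H1, the deleted letter $A$ contributes $u(w,A)-t(w)=[\trivial|xy]-[\trivial|xAAy]=0$ via an H1 in the second component of $\trivial|xAAy$; for any other letter $B$, the consecutive $AA$ lies entirely inside one component of $p(w,B)$, and an H1 in that component identifies $u(w,B)$ with $u(w',B)$.

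The H2 move $w=xAByBAz\leftrightarrow w'=xyz$ is the first place where the quotient modulo $2G$ is genuinely used. Again $t(w)=t(w')$ via an H2 in the second component of $\trivial|w$. The two removed letters contribute $u(w,A)+u(w,B)-2t(w)$, where a shift followed by H1 in the first component reduces $p(w,A)=ByB|xz$ to $y|xz$, and an H1 in the second component reduces $p(w,B)=y|xAAz$ to $y|xz$; hence this contribution equals $2([y|xz]-t(w))\in 2G$. For each other letter $C$, I split into cases by where the two $C$'s lie among $x$, $y$, $z$: when they sit in a single region or one in $x$ and one in $z$, an H2 applied within one component of $p(w,C)$ directly produces $p(w',C)$. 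The remaining subcases, where $C$ straddles an outer region and the middle region so that $A$ and $B$ end up in different components of $p(w,C)$, are handled by the derived move H2a (whose middle sequence may legitimately contain the separator $|$) after cyclically aligning the two components and inserting an auxiliary pair by a reverse H1; this shows $u(w,C)=u(w',C)$ in $\gpset{2}$ for every such $C$.

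The H3 move $w=xAByACzBCt\leftrightarrow w'=xBAyCAzCBt$ is the main obstacle. Once more $t(w)=t(w')$, via an H3 in the second component of $\trivial|w$. For each letter $D\notin\{A,B,C\}$, the two occurrences of $D$ determine a positional configuration among $x$, $y$, $z$, $t$; in each configuration, appropriate cyclic shifts followed by an H3 (or a derived move H3a, H3b, or H3c) applied inside a single component of $p(w,D)$ transform it into $p(w',D)$, so $u(w,D)=u(w',D)$ in $\gpset{2}$. The real work lies in the three terms coming from $A$, $B$, $C$ themselves: I will explicitly compute each of the six phrases $p(w,X)$ and $p(w',X)$ for $X\in\{A,B,C\}$, reduce them as far as possible using the phrase-level moves, and show that the resulting six classes sum, with signs, to an element of $2G$. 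I expect this six-term bookkeeping to be the hardest part of the proof, because the H3 move genuinely permutes three letters and the individual $u(w,X)$, $u(w',X)$ need not be equal in $\gpset{2}$; it is only their signed sum modulo $2$ that vanishes, which is the structural reason why $z$ is forced to take values in $G/2G$ rather than in $G$.
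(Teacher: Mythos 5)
Your overall strategy is the same as the paper's: treat each move separately, compare $p(w,X)$ with $p(w',X)$ letter by letter, and use the quotient by $2G$ only where a genuine doubling occurs. Your handling of isomorphism, H1, and H2 is essentially the paper's argument (for H2 the paper likewise shows $p(w,A)=ByB|xz$ and $p(w,B)=y|xAAz$ both reduce to $y|xz$, so the discrepancy is $2u(w,A)-2t(w)\in 2G$). However, there are two genuine problems. First, in the shift case your blanket claim that ``each pair $p(w,B)$, $p(w',B)$ differs at most by a shift inside one component'' fails for the shifted letter itself: if $w=AxAy$ and $w'=xAyA$, then $p(w,A)=x|y$ while $p(w',A)=y|x$, and these are related by a \emph{permutation of the two components}, not by a shift within a component. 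This is precisely the reason $u(w,A)$ is defined as an \emph{unordered} homotopy class in $\gpset{2}$; without invoking that permutation your argument for the shift move does not close.

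Second, and more seriously, you defer the H3 computation for the letters $A$, $B$, $C$ themselves, which you correctly identify as the crux --- so the proof is incomplete as written. Your expectation about that computation is also off: it is not a delicate six-term cancellation modulo $2$; each class matches its partner individually, so $g(w)=g(w')$ exactly for H3 (only H2 ever uses the quotient). Concretely, with $w=tABxACyBCz$ and $w'=tBAxCAyCBz$: $p(w,A)=Bx|tCyBCz$ becomes $xB|tCyBCz$ after a shift on the first component, which is isomorphic to $p(w',A)=xC|tByCBz$ via $B\leftrightarrow C$; both $p(w,B)=xACy|tACz$ and $p(w',B)=AxCAyC|tz$ reduce to $xy|tz$ by H2a (the latter after a shift on the first component); and $C$ is symmetric to $A$. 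A further small imprecision: for a letter $D\notin\{A,B,C\}$ you propose applying H3 ``inside a single component'' after cyclic shifts, but in several of the ten configurations the H3 pattern unavoidably spans both components of $p(w,D)$ (the separator $|$ falls inside one of the gap words), and shifts cannot merge components; the move must instead be applied across the separator, which the definition of moves on Gauss phrases permits.
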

\begin{proof}
We need to prove that if two Gauss words $w_1$ and $w_2$ are homotopic,
 $z(w_1)$ is equal to $z(w_2)$.
In order to do this, it is sufficient to prove that $z$ is invariant
 under isomorphism, the shift move and the moves H1, H2 and H3.
We note that if $w_1$ and $w_2$ are homotopic, $t(w_1)$ is equal to
 $t(w_2)$ by definition.
\par
Suppose $w_1$ and $w_2$ are isomorphic Gauss words.
Then each letter $A$ in $w_1$ is mapped to some letter $A^\prime$ in
 $w_2$ by some isomorphism $f$. 
So $p(w_2,A^\prime)$ is isomorphic to $p(w_1,A)$ under the isomorphism
 $f$ restricted to all the letters in $w_1$ except for $A$.
This means that $u(w_2,A^\prime)$ is equal to $u(w_1,A)$.
As this is the case for every letter in $w_1$ we can conclude that
 $g(w_1)$ is equal to $g(w_2)$.
In particular, this means that $z(w_1)$ is equal to $z(w_2)$ and so $z$
 is invariant under isomorphism.
\par
Suppose $w_1$ and $w_2$ are related by a shift move.
Then $w_1$ is of the form $Av$ and $w_2$ is of the form $vA$.
\par
Let $B$ be some other letter appearing in $w_1$.
Then $w_1$ has the form $AxByBz$ and $w_2$ has the form $xByBzA$.
Now $p(w_1,B)$ is $y|Axz$ and $p(w_2,B)$ is $y|xzA$.
Applying a shift move to the second component of $p(w_1,B)$ we get
 $p(w_2,B)$. 
This means $u(w_1,B)$ is equal to $u(w_2,B)$.
\par
We now turn our attention to the letter $A$.
We can write $w_1$ in the form $AxAy$ and $w_2$ in the form $xAyA$.
Then $p(w_1,A)$ is $x|y$ and $p(w_2,A)$ is $y|x$.
As Gauss phrases, $x|y$ and $y|x$ are not necessarily equal.
However, they are related by a permutation and so are equivalent under
 unordered homotopy.
Thus $u(w_1,A)$ is equal to $u(w_2,A)$.
\par
As $u(w_1,X)$ equals $u(w_2,X)$ for each letter $X$ in $w_1$, we can
 conclude that  $g(w_1)$ is equal to $g(w_2)$.
Thus $z$ is invariant under the shift move.
\par
Suppose $w_1$ and $w_2$ are related by an H1 move.
Then $w_1$ has the form $xAAz$ and $w_2$ has the form $xz$.
Now for any letter $B$ in $w_1$ other than $A$, $p(w_1,B)$ will contain
 the subword $AA$ in one or other of its two components.
Thus this subword can be removed from by $p(w_1,B)$ an H1 move.
The result is the Gauss phrase $p(w_2,B)$ which implies that $u(w_1,B)$
 equals $u(w_2,B)$.
Therefore, if we subtract $g(w_2)$ from $g(w_1)$ we get
$u(w_1,A) - t(w_1)$. 
Now as $p(w_1,A)$ is $\trivial|xz$ which is homotopic to $\trivial|w_1$,
 $u(w_1,A)$ is equal to $t(w_1)$.
Thus $g(w_2)$ is equal to $g(w_1)$ and so $z$ is invariant under the H1
 move.
\par
Suppose $w_1$ and $w_2$ are related by an H2 move.
Then $w_1$ has the form $xAByBAz$ and $w_2$ has the form $xyz$.
It is easy to see that for any letter $C$ in $w_1$,
 other than $A$ or $B$, $p(w_1,C)$ and $p(w_2,C)$ will be related by an
 H2 move involving $A$ and $B$.
Thus $u(w_1,C)$ equals $u(w_2,C)$ for all such letters $C$.
Now $p(w_1,A)$ is $ByB|xz$.
By applying a shift move to the first component and then an H1 move, we
 can remove the letter $B$ and get a Gauss phrase $y|xz$.
On the other hand, $p(w_1,B)$ is $y|xAAz$.
Applying an H1 move to the second component, we get the Gauss phrase
 $y|xz$.
Thus $p(w_1,A)$ is homotopic to $p(w_1,B)$ and so $u(w_1,A)$ equals
 $u(w_1,B)$.
We can conclude that if we subtract $g(w_2)$ from $g(w_1)$ we get
 $2u(w_1,A) - 2t(w_1)$.
As $2u(w_1,A) - 2t(w_1)$ is in the kernel of $\theta$, $z(w_1)$ equals
 $z(w_2)$ and $z$ is invariant under the H2 move.
\par
Finally, suppose $w_1$ and $w_2$ are related by an H3 move.
Then $w_1$ has the form $tABxACyBCz$ and $w_2$ has the form
 $tBAxCAyCBz$.
\par
Let $D$ be some letter in $w_1$ other than $A$, $B$ or $C$.
There are $10$ possible cases depending on where the two occurences of
 $D$ occur in relation to the subwords $AB$, $AC$ and $BC$.
In Table~\ref{tab:h3invariance} we show that in all $10$ cases,
 $p(w_1,D)$ is homotopic to $p(w_2,D)$ by the homotopy move shown in the
 column furthest to the right. 
Thus in every case, $u(w_1,D)$ is equal to $u(w_2,D)$.
\par
\begin{table}[hbt]
\begin{center}
\begin{tabular}{r|c|c|c|c}
Case & $w_1$ & $p(w_1,D)$ & $p(w_2,D)$ & Move \\
\hline
 1 & $rDsDtABxACyBCz$ & $s|rtABxACyBCz$ & $s|rtBAxCAyCBz$ & H3 \\
 2 & $rDsABtDxACyBCz$ & $sABt|rxACyBCz$ & $sBAt|rxCAyCBz$ & H3 \\
 3 & $rDsABtACxDyBCz$ & $sABtACx|ryBCz$ & $sBAtCAx|ryCBz$ & H3 \\
 4 & $rDsABtACxBCyDz$ & $sABtACxBCy|rz$ & $sBAtCAxCBy|rz$ & H3 \\
 5 & $rABsDtDxACyBCz$ & $t|rABsxACyBCz$ & $t|rBAsxCAyCBz$ & H3 \\
 6 & $rABsDtACxDyBCz$ & $tACx|rABsyBCz$ & $tCAx|rBAsyCBz$ & H3c \\
 7 & $rABsDtACxBCyDz$ & $tACxBCy|rABsz$ & $tCAxCBy|rBAsz$ & H3c \\
 8 & $rABsACtDxDyBCz$ & $x|rABsACtyBCz$ & $x|rBAsCAtyCBz$ & H3 \\
 9 & $rABsACtDxBCyDz$ & $xBCy|rABsACtz$ & $xCBy|rBAsCAtz$ & H3b \\
10 & $rABsACtBCxDyDz$ & $y|rABsACtBCxz$ & $y|rBAsCAtCBxz$ & H3 \\
\end{tabular}
\end{center}
\caption{Invariance under homotopy of a Gauss phrase associated with a
 letter uninvolved in an H3 move}
\label{tab:h3invariance}
\end{table}
We now turn our attention to the letters $A$, $B$ and $C$.
\par
In the case of $A$, $p(w_1,A)$ is $Bx|tCyBCz$ and $p(w_2,A)$ is
 $xC|tByCBz$. 
Applying a shift move to the first component of $p(w_1,A)$ we get
 $xB|tCyBCz$ which is isomorphic to $p(w_2,A)$.
Thus $u(w_1,A)$ is equal to $u(w_2,A)$.
\par
In the case of $B$, $p(w_1,B)$ is $xACy|tACz$.
By applying an H2a move to remove the letters $A$ and $C$ we get the
 Gauss phrase $xy|tz$.
On the other hand, $p(w_2,B)$ is $AxCAyC|tz$.
We can apply a shift move to the first component followed by an H2a
 move to remove the letters $A$ and $C$.
We again get the Gauss phrase $xy|tz$.
Thus $p(w_1,B)$ and $p(w_2,B)$ are homotopic and so $u(w_1,B)$ is equal
 to $u(w_2,B)$.
\par
In the case of $C$, $p(w_1,C)$ is $yB|tABxAz$ and $p(w_2,C)$ is
 $Ay|tBAxBz$. 
Applying a shift move to the first component of $p(w_2,C)$ we get the Gauss
 phrase $yA|tBAxBz$ which is isomorphic to $p(w_1,C)$.
Thus $u(w_1,C)$ is equal to $u(w_2,C)$.
\par
So we have seen that $u(w_1,X)$ is equal to $u(w_2,X)$ for all letters
 $X$ in $w_1$. 
Therefore $g(w_1)$ is equal to $g(w_2)$ and we can conclude that $z$ is
 invariant under the H3 move.
\end{proof}
We now calculate this invariant for two examples.
\begin{ex}
Consider the trivial Gauss word $\trivial$.
As there are no letters in $\trivial$, $g(\trivial)$ is $0$ and so
 $z(\trivial)$ is $0$. 
\end{ex}
\begin{ex}\label{ex:counter}
Let $w$ be the Gauss word $ABACDCEBED$.
We calculate $z(w)$.
\par
The Gauss phrase $p(w,A)$ is $B|CDCEBED$. By using a shift move on the
 second component and applying an H2a move, we can remove $C$ and $D$.
After applying another shift move to the second component we can use an H1
 to remove $E$.
Thus $p(w,A)$ is homotopic to $B|B$.
\par
The Gauss phrase $p(w,B)$ is $ACDCE|AED$.
Applying two shift moves to the first component gives $DCEAC|AED$.
We can then remove $A$ and $E$ by an H2 move and then, using an H1 move,
 remove $C$.
This means that $p(w,B)$ is homotopic to $D|D$.
\par
The Gauss phrase $p(w,C)$ is $D|ABAEBED$.
We can apply an H3 move to the letters $A$, $B$ and $E$.
The result is $D|BAEAEBD$. 
The letters $A$ and $E$ can be removed by an H2a move and then the
 letter $B$ can be removed by an H1 move.
This shows that $p(w,C)$ is homotopic to $D|D$.
\par
The Gauss phrase $p(w,D)$ is $CEBE|ABAC$.
\par
The Gauss phrase $p(w,E)$ is $B|ABACDCD$.
Applying a shift move to the second component gives $B|BACDCDA$.
We can then use an H2a move and an H1 move to remove the letters $C$,
 $D$ and $A$.
This shows that $p(w,E)$ is homotopic to $B|B$.
\par
From these calculations we can see that $p(w,A)$, $p(w,B)$, $p(w,C)$ and 
 $p(w,E)$ are all mutually homotopic.
Thus we can write $g(w)$ as $4u(w,A) + u(w,D) - 5t(w)$.
Now observe that $\theta(4u(w,A))$ is in the kernel of $\theta$ and that
 $\theta(- 5t(w))$ is equal to $\theta(t(w))$.
So $z(w)$ is equal to $\theta (u(w,D)) + \theta(t(w))$.
\par
We calculate the $S$ invariant for $p(w,D)$.
We find that it is the pair of matrices
\begin{equation}\label{eq:s-ud}
\begin{pmatrix}
\begin{pmatrix}
0 & 0 \\
0 & 1 \\
\end{pmatrix},
\begin{pmatrix}
0 & 0 \\
1 & 0 \\
\end{pmatrix}
\end{pmatrix}.
\end{equation}
The transposition of $S(p(w,D))$ gives the same pair of matrices.
This is unsurprising because swapping the components in $CEBE|ABAC$
 gives $ABAC|CEBE$ which, after applying three shift moves to the first
 component and one to the second, is isomorphic to $CEBE|ABAC$.
\par
Calculating $S(\trivial|w)$ and $S(w|\trivial)$, we find they are both
 given by 
\begin{equation}\label{eq:s-trivial}
\begin{pmatrix}
\begin{pmatrix}
0 & 0 \\
\end{pmatrix},
\begin{pmatrix}
0 & 0 \\
\end{pmatrix}
\end{pmatrix}.
\end{equation}
As the pairs of matrices in \eqref{eq:s-ud} and \eqref{eq:s-trivial} are
 not equal, we can conclude that $u(w,D)$ is not equal to $t(w)$.
Thus $z(w)$ is not equal to $0$.
As we have already shown that $z(\trivial)$ is $0$, we can conclude that
 $w$ is not homotopically trivial.
\end{ex}
\par
From these examples we can make the following conclusion.
\begin{cor}\label{cor:nottrivial}
There exist Gauss words that are not homotopically trivial.
\end{cor}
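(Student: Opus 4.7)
The plan is to invoke Theorem~\ref{thm:invariant_z} in its contrapositive form: since $z$ is a homotopy invariant of Gauss words, any two Gauss words taking distinct values under $z$ must lie in distinct homotopy classes. Consequently, to establish the corollary it suffices to exhibit a single Gauss word $w$ satisfying $z(w) \neq z(\trivial)$.

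First I would dispose of the trivial side. Because $\trivial$ contains no letters, the sum defining $g(\trivial)$ is empty, so $g(\trivial) = 0$ and hence $z(\trivial) = 0$ in $Z = G/2G$. The problem is then reduced to producing a witness $w$ with $z(w) \neq 0$. For this I would select a Gauss word of modest length whose derived Gauss phrases $p(w,X)$ can be simplified effectively under the moves on Gauss phrases introduced in Section~\ref{sec:gausswords}; the word $ABACDCEBED$ treated in Example~\ref{ex:counter} is a natural candidate. My approach for this candidate is to reduce each $p(w,X)$ under unordered homotopy of $2$-component Gauss phrases using shift moves together with H1, H2, H2a, and H3, and then group the resulting five contributions to $g(w)$ so that as many as possible pair up and cancel modulo $2$ after applying $\theta$.

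The main obstacle is certifying that the surviving contribution is genuinely nonzero in $Z$. After the expected reductions, I anticipate that four of the five phrases $p(w,X)$ collapse into a common unordered homotopy class, so their combined contribution lands in $2G$ and vanishes under $\theta$, leaving a small residue involving only the stubborn phrase (in the candidate above, $p(w,D) = CEBE|ABAC$) together with $t(w)$. To show this residue is nonzero in $Z$, I would apply the invariant $S$ of Section~\ref{sec:gaussphrases}, read modulo the transposition equivalence defined there for unordered homotopy. The key check is that the canonical matrix representative of $S$ on the residual Gauss phrase cannot be matched, even after transposition, by the all-zero matrix representative of $S(\trivial|w)$. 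Once this separation is in place, one concludes $z(w) \neq 0 = z(\trivial)$, and the corollary follows directly from the homotopy invariance of $z$.
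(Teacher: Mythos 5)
Your proposal is correct and follows essentially the same route as the paper: the paper also establishes $z(\trivial)=0$, computes $z$ on $w=ABACDCEBED$ by reducing the five phrases $p(w,X)$ so that four of them ($p(w,A)$, $p(w,B)$, $p(w,C)$, $p(w,E)$) fall into one unordered homotopy class, and then distinguishes the residue $\theta(u(w,D))+\theta(t(w))$ from zero by comparing the $S$ invariant of $CEBE|ABAC$ (modulo transposition) with that of $\trivial|w$. No substantive differences to report.
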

\par
If two nanowords are homotopic then their associated Gauss words must be
homotopic.
This means that the $z$ invariant is an invariant of any homotopy of
nanowords.
As virtual knots can be represented as homotopy classes of a certain
homotopy of nanowords \cite{Turaev:KnotsAndWords}, this shows that there
exist non-trivial virtual knots for which non-triviality can be
determined by just considering their associated Gauss word homotopy
class.
\par
\begin{figure}[hbt]
\begin{center}
\includegraphics{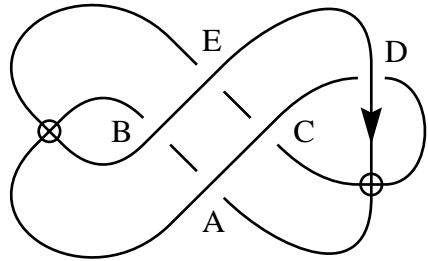}
\caption{A non-trivial virtual knot}
\label{fig:5knot}
\end{center}
\end{figure}
For example, consider the virtual knot shown in Figure~\ref{fig:5knot}.
In the figure, crossings with small circles are virtual crossings (see
Section~\ref{sec:virtualknots} for further details about virtual knots
and their diagrams). 
By taking the arrow as the base point, we can derive a Gauss word from
the diagram.
We find it is the Gauss word considered in Example~\ref{ex:counter}.
From that example we know that the Gauss word is not homotopically
trivial.
Thus we can conclude that the virtual knot is non-trivial.
\par
When we calculate the $z$ invariant for a nanoword, we do so without
reference to its associated projection.
However, using information in the projection it should be possible to
define stronger invariants based on $z$.
Indeed, as we mentioned in the introduction, Henrich's smoothing
invariant \cite{Henrich:vknots} is an example of such an invariant in the
case of virtual knots.
\section{Coverings}\label{sec:coverings}
In Section~5 of \cite{Turaev:Words} Turaev introduced an open
homotopy invariant of nanowords called a covering.
This invariant is a map from the set of open homotopy classes of
nanowords to itself.
In this section we give an alternative but equivalent definition of this
invariant for Gauss words and show that it is a homotopy invariant.
We will then use this invariant to contruct infinite families of
mutually non-homotopic Gauss words.
\par
Given a Gauss word $w$ and a letter $A$ appearing in $w$, $w$ has the
form $xAyAz$ for some, possibly empty, arbitrary sequences of letters
$x$, $y$ and $z$.
We say that $A$ has \emph{odd parity} if $y$ has an odd number of
letters and $A$ has \emph{even parity} if $y$ has an even number of
letters.
We define the covering of $w$ to be a copy of $w$
where all the odd parity letters have been removed. We denote the
covering of $w$ by $\cover{w}$.
In Turaev's notation $\cover{w}$ would be denoted $w^H$ where $H$ is the
trivial group (see Section~5.2 of \cite{Turaev:Words} for further
details).
\begin{ex}\label{ex:cover}
Let $w$ be the Gauss word $ABCADBECED$. Then the letters $B$ and $E$
 have odd parity and the other letters are even parity. So
 $\cover{w}$ is the Gauss word $ACADCD$.
\end{ex}
\begin{prop}
The homotopy class of the covering of a Gauss word $w$ is a homotopy
 invariant of $w$.
\end{prop}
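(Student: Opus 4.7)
The plan is to verify, for each elementary equivalence move on Gauss words, two coordinated facts: first, that the parity of every letter is preserved by the move, and second, that the coverings $\cover{w_1}$ and $\cover{w_2}$ are themselves either equal or related by an analogous move. Since isomorphism is immediate (a bijective relabelling preserves all subword lengths), this leaves the shift move and the three moves H1, H2, H3.

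For the shift move $AxAy \leftrightarrow xAyA$ the letter $A$ has parity $|x|$ on one side and $|y|$ on the other, and these agree because $|x|+|y|$ is even. For any other letter $B$ a short case analysis on whether its two occurrences lie in $x$, in $y$, or straddle the two subwords shows the inter-occurrence length is identical in the two words. Hence the set of odd-parity letters is the same. If $A$ is odd it is erased from both sides and the coverings coincide; if $A$ is even the coverings are related by a shift move on $A$.

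The moves H1, H2 and H3 follow the same template. For H1 the letter $A$ is always even and inter-occurrence lengths of other letters change by $0$ or $2$, yielding an H1 move between the coverings. For H2 one computes that $A$ and $B$ have the same parity (both congruent to $|y|\bmod 2$), and for any other letter the length changes by $0$, $2$, or $4$; the coverings are related by an H2 move when $A,B$ are even and are literally equal when they are odd. For H3, writing $a,b,c$ for the parities of $A,B,C$ in $w_1 = xAByACzBCt$, direct computation yields
\begin{equation*}
a + b + c \equiv (|y|+1) + (|y|+|z|) + (|z|+1) \equiv 0 \pmod 2,
\end{equation*}
so only the four patterns $(0,0,0)$, $(1,1,0)$, $(1,0,1)$ and $(0,1,1)$ can occur. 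Since H3 merely permutes letters inside the three two-letter blocks $AB$, $AC$, $BC$, the inter-occurrence length of any $D\notin\{A,B,C\}$ is unchanged and hence its parity is preserved. In the all-even case the coverings are related by an H3 move; in each of the three mixed cases exactly one of $A$, $B$, $C$ survives, and inspection shows $\cover{w_1} = \cover{w_2}$ letter by letter.

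The main obstacle is the parity-preservation bookkeeping for letters not explicitly named in each move: one must systematically run through the positional cases and verify each length change is even. Once this is in place, the matching of coverings is a direct consequence of the definition of $\cover{\cdot}$, and the proposition follows by combining the equalities and single-move relations above into a finite sequence of isomorphisms and homotopy moves between $\cover{w_1}$ and $\cover{w_2}$.
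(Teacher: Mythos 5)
Your proof is correct, but it takes a more self-contained route than the paper. The paper disposes of isomorphism and the moves H1, H2, H3 in one line by citing Turaev's Lemma~5.2.1, which already establishes that the covering is an open homotopy invariant; it then only has to check the shift move, and there its argument is essentially identical to yours (parity of $A$ is preserved because $\abs{x}+\abs{y}$ is even, other letters are unaffected, and the coverings are equal or differ by a shift move according to the parity of $A$). You instead re-prove the open-homotopy part from scratch, which costs you the parity bookkeeping for H1, H2 and H3 but makes the argument independent of Turaev's lemma. The details all check out: the H2 computation that $A$ and $B$ share parity $\abs{y}\bmod 2$, the observation that for a letter $D$ outside a move the named blocks are wholly contained in or wholly disjoint from its inter-occurrence subword (so its parity never changes), and especially the H3 relation $a+b+c\equiv 0\pmod 2$, which correctly restricts the case analysis to the all-even pattern (coverings related by H3) and the three patterns with exactly two odd letters (coverings literally equal). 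That last identity is a genuinely useful structural fact that the paper never states explicitly, since it is hidden inside the citation; your version makes visible why H3 can never strand a single surviving pair in a configuration that would require a new move.
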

\begin{proof}
As Turaev has already proved this fact for open homotopy in Lemma~5.2.1
of \cite{Turaev:Words}, it is sufficient to prove invariance
under the shift move.
\par
Given a Gauss word $u$ of the form $AxAy$, the shift move transforms it
 to a Gauss word of the form $xAyA$ which we label $v$.
Since the length of $u$ is even, the lengths of $x$ and $y$ have the
 same parity.
This means that the parity of $A$ is the same in $u$ and $v$.
For any letter other than $A$, it is clear that the parity of the letter
 is the same in $u$ and $v$.
Thus the parity of any letter in $u$ is invariant under the shift move.
\par
Suppose $A$ has even parity.
Then $\cover{u}$ is given by $Ax^\prime Ay^\prime$ for some words
 $x^\prime$ and $y^\prime$ derived from $x$ and $y$ by deleting odd
 parity letters.  
The covering of $v$, $\cover{v}$, is then $x^\prime Ay^\prime A$.
So $\cover{v}$ can be obtained from $\cover{u}$ by a shift move
 which means they are homotopic.
\par
Suppose $A$ has odd parity.
Then $\cover{u}$ is given by $x^\prime y^\prime$ for some words
 $x^\prime$ and $y^\prime$ and $\cover{v}$ is also given by
 $x^\prime y^\prime$. 
Thus $\cover{u}$ and $\cover{v}$ are equal and therefore homotopic.
\end{proof}
\begin{rem}
In fact, we can define this kind of covering for any homotopy of
nanowords. This is because the parity of a letter in a Gauss word can be
calculated without any reference to the projection of the nanoword.
In this general setting we call this covering the \emph{even parity
 covering}. 
We first gave a definition of the even parity covering of nanowords in
\cite{Gibson:mthesis}.
\end{rem}
\par
We have the following lemma.
\begin{lem}\label{lem:reduce}
Let $w$ be a Gauss word.
If $w$ is not homotopic to $\cover{w}$ then the homotopy rank of
 $\cover{w}$ is strictly less than the homotopy rank of $w$. 
\end{lem}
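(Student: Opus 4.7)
The plan is to use the previous proposition (that the homotopy class of $\cover{w}$ depends only on the homotopy class of $w$) together with a minimum-rank representative argument, and prove the contrapositive: if $\hr(\cover{w}) \geq \hr(w)$, then $w$ is homotopic to $\cover{w}$.

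First, I would pick a Gauss word $w'$ homotopic to $w$ which realizes the homotopy rank, so that $\rank(w') = \hr(w)$. By the preceding proposition, $\cover{w}$ is homotopic to $\cover{w'}$, which immediately gives the chain of inequalities
\begin{equation*}
\hr(\cover{w}) \;\leq\; \rank(\cover{w'}) \;\leq\; \rank(w') \;=\; \hr(w),
\end{equation*}
where the middle inequality holds because $\cover{w'}$ is obtained from $w'$ by deleting letters. This already yields $\hr(\cover{w}) \leq \hr(w)$ unconditionally, so the remaining task is just to rule out equality under the hypothesis that $w$ is not homotopic to $\cover{w}$.

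Assume for contradiction that $\hr(\cover{w}) = \hr(w)$. Then both inequalities above must be equalities, and in particular $\rank(\cover{w'}) = \rank(w')$. Since $\cover{w'}$ is obtained from $w'$ by erasing both occurrences of every odd-parity letter, this forces $w'$ to contain no odd-parity letters at all, which by the definition of the covering means $\cover{w'} = w'$ as Gauss words. Combining this with the homotopies $\cover{w} \sim \cover{w'}$ and $w' \sim w$ gives $\cover{w} \sim w$, contradicting the hypothesis.

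There is no real obstacle here beyond being careful that the covering operation and the homotopy-rank minimization interact well; the key observation doing the work is that $\cover{w'} = w'$ exactly when $w'$ has no odd-parity letters, which is immediate from the definition of $\cover{\cdot}$ given above Example~\ref{ex:cover}. The only subtle point to flag is that the proposition on invariance of the covering class is essential, since it is what lets us replace $w$ by its minimum-rank representative $w'$ before taking the covering.
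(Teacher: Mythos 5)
Your proof is correct and follows essentially the same route as the paper: pass to a minimum-rank representative $w'$, use the homotopy invariance of the covering to relate $\cover{w}$ to $\cover{w'}$, and observe that $\rank(\cover{w'}) = \rank(w')$ would force $\cover{w'} = w'$ and hence $\cover{w} \sim w$, contradicting the hypothesis. The only difference is cosmetic (you organize it around the inequality chain and rule out equality, while the paper argues directly that $\rank(\cover{w'}) < m$).
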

\begin{proof}
Let $m$ be the homotopy rank of $w$.
Then we can find a Gauss word $w^\prime$ which is homotopic to $w$ and
 has rank $m$.
Consider $\cover{w^\prime}$.
If the rank of $\cover{w^\prime}$ is $m$ it means that
 $\cover{w^\prime}$ is the same as $w^\prime$.
However, as $\cover{w^\prime}$ is homotopic to $\cover{w}$, this
 implies $\cover{w}$ is homotopic to $w$, contradicting the
 assumption of the lemma.
Thus the rank of $\cover{w^\prime}$ cannot be $m$ and
 $\cover{w^\prime}$ is different from $w^\prime$.
As, by definition, we derive $\cover{w^\prime}$ from $w^\prime$ by
 removing letters, we must conclude that the rank of
 $\cover{w^\prime}$ is less than $m$.
Thus, the homotopy rank of $\cover{w}$, which is less than or equal
 to the rank of $\cover{w^\prime}$, is less than $m$.
\end{proof}
\begin{rem}
In fact, we can say that the rank of $\cover{w^\prime}$ must be less
 than $m-1$.
For if the rank of $\cover{w^\prime}$ was $m-1$ it would
 mean that we obtained $\cover{w^\prime}$ by removing a single letter
 from $w^\prime$.
This in turn would imply that $w^\prime$ had a single odd parity letter.
However, this would contradict Lemma~5.2 of \cite{Gibson:gauss-phrase},
 which states that any Gauss word has an even number of odd parity
 letters.
\end{rem}
As the covering of a Gauss word is itself a Gauss word, we can
repeatedly take coverings to form an infinite sequence of Gauss words.
That is, given a Gauss word $w$, define $w_0$ to be $w$ and define $w_i$
to be $\cover{w_{i-1}}$ for all positive integers $i$.
As $w$ has a finite number of letters, Lemma~\ref{lem:reduce} shows that
there must exist an $n$ for which $w_{n+1}$ is homotopic to $w_n$.
Let $m$ be the smallest such $n$.
We define the height of $w$, $\height(w)$, to be $m$ and the base of
$w$, $\base(w)$ to be the homotopy class of $w_m$.
The height and base of $w$ are homotopy invariants of $w$.
\par
In \cite{Gibson:ccc} we defined height and base invariants for virtual
strings in the same way.
We showed that the base invariants are non-trivial for virtual strings
in that paper.
However, we do not know whether the base invariant we have defined here
is non-trivial for Gauss words.
In other words, we have not yet found a Gauss word $w$ for which
we can prove $\base(w)$ is not homotopically trivial.
\par
Given a Gauss word $w$, we can define a new Gauss word $v$ such that
$\cover{v}$ is $w$.
We start by taking a copy of $w$. Then for each odd parity letter $A$ in
$w$ we replace the first occurence of $A$ with $XAX$ for some letter $X$
not already appearing in $w$.
Note that this replacement changes the parity of $A$ to make it
even. 
The parity of any other letter in the word is unchanged because we
replace a subword of length $1$ with a subword of length $3$.
Note also that the introduced letter $X$ has odd parity.
After making the change for each odd parity letter in $w$,
we call the final Gauss word $\lift{w}$.
\par
By construction all the letters in $\lift{w}$ that were originally in
$w$ have even parity and all the letters that were introduced have odd
parity. 
Thus, when we take the covering of $\lift{w}$ we remove all the letters
that we introduced and we are left with the letters in $w$. Since we did
not change the order of the letters, we can conclude that
$\cover{\lift{w}}$ is equal to $w$.
\begin{ex}
Let $w$ be the Gauss word $ABCADBECED$ from Example~\ref{ex:cover}.
There are only two odd parity letters in $w$, $B$ and $E$.
We replace the first occurence of $B$ with $XBX$ and the first occurence
 of $E$ with $YEY$.
The result is the Gauss word $AXBXCADBYEYCED$ which we label
 $\lift{w}$.
The covering of $\lift{w}$ is $w$.
\end{ex}
We remark that if a Gauss word $w$ contains no odd parity letters, then
$\lift{w}$ is the same as $w$.
Even if $w$ and $\lift{w}$ are not equal as Gauss words, they may be
homotopic.
We provide an example to show this. 
\begin{ex}
Consider the Gauss word $w$ given by $ABAB$.
Then $\lift{w}$ is given by $XAXYBYAB$. 
By move H3c on $X$, $A$ and $Y$, $\lift{w}$ is homotopic to
$AXYXBAYB$. Applying a shift move we get $XYXBAYBA$ which can be reduced
to $XYXY$ by an H2a move involving $A$ and $B$. This Gauss word is
isomorphic to $w$. Thus $\lift{w}$ and $w$ are homotopic. 
\end{ex}
\par
Given a Gauss word $w$ we can make an infinite family of Gauss words
$w_i$ by repeated use of this construction.
We define $w_0$ to be $w$.
Then inductively we define $w_i$ to be $\lift{w_{i-1}}$ for all positive
integers $i$.
\begin{lem}\label{lem:infinite-family}
Let $w$ be a Gauss word such that $\cover{w}$ and $w$ are not
 homotopic.
Let $w_i$ be the infinite family of Gauss words defined from $w$ as
 above.
Then the $w_i$ are all mutually non-homotopic. 
\end{lem}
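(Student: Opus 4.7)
The plan is to use the height invariant to distinguish the $w_i$. The key algebraic fact is that $\cover{\lift{u}}$ equals $u$ for any Gauss word $u$, which is immediate from the construction of $\lift{u}$. Hence $\cover{w_i} = w_{i-1}$ for all $i \geq 1$, and the iterated covering sequence of $w_i$ begins $w_i, w_{i-1}, \ldots, w_1, w_0 = w, \cover{w}, \cover{\cover{w}}, \ldots$, agreeing with the covering sequence of $w$ from position $i$ onward.

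The first and main step will be to show that $w_j$ is not homotopic to $w_{j+1}$ for any $j \geq 0$. Suppose the contrary; then by the preceding proposition on invariance of the homotopy class of the covering, the Gauss words $w_{j-1} = \cover{w_j}$ and $w_j = \cover{w_{j+1}}$ are homotopic. Iterating this descent $j-1$ more times, $w_0$ is homotopic to $w_1$, that is, $w$ is homotopic to $\lift{w}$. Taking the covering once more, $w = \cover{\lift{w}}$ is homotopic to $\cover{w}$, contradicting the hypothesis of the lemma.

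Given this, I would read off $\height(w_i) = i + \height(w)$ directly from the covering sequence of $w_i$: for $0 \leq m < i$ the terms at positions $m$ and $m+1$ are $w_{i-m}$ and $w_{i-m-1}$, which are not homotopic by the first step; for $m \geq i$ the terms coincide with the $(m-i)$th and $(m-i+1)$st entries of the covering sequence of $w$, which first become homotopic at $m - i = \height(w)$ by the definition of height. Since $\height$ is a homotopy invariant and the values $i + \height(w)$ are pairwise distinct in $i$, the $w_i$ are mutually non-homotopic. The only real obstacle is the descent step above; the remainder is bookkeeping, and the trick is that $\cover{\cdot}$ descends to a well-defined map on homotopy classes, so any hypothetical homotopy between $w_j$ and $w_{j+1}$ can be pulled back $j$ times to contradict the assumption directly.
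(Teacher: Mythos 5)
Your proposal is correct and follows essentially the same route as the paper: assume $w_j\simeq w_{j+1}$, apply the covering repeatedly (using that the homotopy class of the covering is a homotopy invariant and $\cover{w_k}=w_{k-1}$) to deduce $w\simeq\cover{w}$, contradicting the hypothesis, and then distinguish the $w_i$ by the height invariant via $\height(w_i)=\height(w)+i$. The only difference is cosmetic — you descend one covering at a time and read the height off the covering sequence, while the paper takes all $i+1$ coverings at once and computes the height by induction.
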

\begin{proof}
Suppose, for some $i$, $w_{i+1}$ is homotopic to $w_i$.
By construction, starting from $w_{i+1}$ and taking the covering $i+1$
 times, we get $w$.
Similarly, starting from $w_i$, taking the covering $i+1$ times gets
 $\cover{w}$.
Since the covering of a Gauss word is a homotopy invariant, our
 supposition implies that $w$ is homotopic to $\cover{w}$.
However this contradicts the assumption in the statement of the lemma.
Therefore, for all $i$, $w_{i+1}$ is not homotopic to $w_i$.
\par
As $\cover{w_{i+1}}$ is $w_i$, this implies $\height(w_{i+1})$ is equal
 to $\height(w_i)+1$ for all $i$.
It is now simple to prove that $\height(w_i)$ is $\height(w)+i$ by
 induction.
\par
As each Gauss word $w_i$ has a different height we can conclude that
 they are all mutually non-homotopic.
\end{proof}
\begin{prop}\label{prop:infinite}
There are an infinite number of homotopy classes of Gauss words.
\end{prop}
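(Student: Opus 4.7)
The plan is to apply Lemma~\ref{lem:infinite-family} to a specific Gauss word, so the whole proof reduces to exhibiting one Gauss word $w$ for which $\cover{w}$ and $w$ are not homotopic. Once such a $w$ is in hand, the lemma immediately produces an infinite sequence $w_0, w_1, w_2, \dotsc$ of mutually non-homotopic Gauss words, which proves the proposition.

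For the witness, I would reuse the Gauss word $w = ABACDCEBED$ from Example~\ref{ex:counter}. First I would compute the parity of each letter by counting the length of the subword between its two occurrences: $A$, $B$, $C$, $E$ are all odd parity (with $1$, $5$, $1$, $1$ letters in between respectively), and $D$ is even parity (with $4$ letters in between). Therefore $\cover{w}$ is obtained by deleting every letter except $D$, giving the Gauss word $DD$, which is homotopic to $\trivial$ via a single H1 move. In particular, $\cover{w}$ is homotopically trivial.

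On the other hand, Example~\ref{ex:counter} (via Theorem~\ref{thm:invariant_z}) has already established that $z(w) \neq 0 = z(\trivial)$, so $w$ is not homotopic to $\trivial$ and hence not homotopic to $\cover{w}$. This is exactly the hypothesis required by Lemma~\ref{lem:infinite-family}, so the family $w_i$ obtained by iterated lifting from $w_0 = w$ consists of pairwise non-homotopic Gauss words, yielding infinitely many distinct homotopy classes.

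There is no real obstacle here: both of the substantive inputs (non-triviality of $z(w)$ and the height-based separation of the lifted family) are established earlier in the paper. The only thing that has to be verified explicitly in the proof itself is the parity computation that identifies $\cover{w}$ as $DD$; everything else is quotation of Example~\ref{ex:counter} and Lemma~\ref{lem:infinite-family}.
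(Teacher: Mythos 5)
Your proposal is correct and follows exactly the paper's own proof: invoke Lemma~\ref{lem:infinite-family} with the witness $w=ABACDCEBED$ from Example~\ref{ex:counter}, whose covering is $DD$ (homotopically trivial) while $w$ itself is not trivial by the $z$ invariant. The parity computation you spell out is right and is the only detail the paper leaves implicit.
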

\begin{proof}
By Lemma~\ref{lem:infinite-family} we just need to give an example of a
 Gauss word which is not homotopic to its cover.
Consider the Gauss word $w$ given by $ABACDCEBED$.
Then $\cover{w}$ is $DD$ which is homotopic to the trivial Gauss
 word.
On the other hand, in Example~\ref{ex:counter}
we saw that $w$ is not homotopic to the trivial Gauss word.
Thus $w$ and $\cover{w}$ are not homotopic.
\end{proof}
\section{Open homotopy}\label{sec:openhomotopy}
Although the invariant $z$ is an invariant for open homotopy of Gauss
words, we can use a similar construction to make a stronger invariant
for open homotopy. We call this invariant $z_o$.
\par
Let $\gpmset{2}$ be the set of equivalence classes of $2$-component Gauss
phrases under mixed homotopy.
Recall that we defined this to be the homotopy where the first component
is closed and the second is open.
Let $H$ be the free abelian group generated by $\gpmset{2}$.
We then define $Z_o$ to be $H/2H$.
Let $\phi$ be the natural homomorphism from $H$ to $Z_o$.
\par
For a Gauss word $w$ and a letter $A$ appearing in $w$, we define
$u_m(w,A)$ to be the equivalence class in $\gpmset{2}$ which contains
$p(w,A)$.
Here $p(w,A)$ is the Gauss phrase defined in
Section~\ref{sec:invariant}.
We define $t_m(w)$ to be the element in $\gpmset{2}$ which contains
$\trivial|w$.
\par
For each Gauss word $w$, we define $h(w)$ by
\begin{equation*}
h(w) = \sum_{A \in w} \left( u_m(w,A) - t_m(w) \right).
\end{equation*}
Then $h$ is a map from the set of Gauss words to $H$.
We then define $z_o(w)$ to be $\phi(h(w))$.
Thus $z_o$ is a map from the set of Gauss words to $Z_o$.
\par
We have the following theorem.
\begin{thm}
The map $z_o$ is an open homotopy invariant of Gauss words.
\end{thm}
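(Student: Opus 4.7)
The plan is to mirror the proof of Theorem~\ref{thm:invariant_z} essentially line by line, replacing unordered homotopy of $2$-component Gauss phrases with mixed homotopy, and replacing $u$, $t$, $g$, $\theta$ with their decorated counterparts $u_m$, $t_m$, $h$, $\phi$. Since $z_o$ is only asserted to be an open homotopy invariant, I only need to verify invariance under isomorphism and the three moves H1, H2, H3 — the shift move on Gauss words does not need to be treated at all, which in fact makes the argument slightly shorter than for $z$.

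For the isomorphism case the argument is identical: an isomorphism $f$ from $w_1$ to $w_2$ restricts, for each letter $A$ in $w_1$, to an isomorphism of the Gauss phrases $p(w_1,A)$ and $p(w_2,f(A))$, so $u_m(w_1,A) = u_m(w_2,f(A))$ and $h(w_1) = h(w_2)$. For H1 the key computation $u_m(w_1,A) = t_m(w_1)$ used in the previous proof still holds, because the Gauss phrase $\trivial\,|\,xz$ differs from $\trivial\,|\,xAAz$ by an H1 move in a single component, and H1 moves are permitted in both components under mixed homotopy. For H2 the argument that $u_m(w_1,A) = u_m(w_1,B)$ reduces the difference $h(w_1)-h(w_2)$ to $2u_m(w_1,A) - 2t_m(w_1)$, which lies in the kernel of $\phi$. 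For H3 the ten cases of Table~\ref{tab:h3invariance} and the separate analyses of the letters $A$, $B$, $C$ carry over verbatim, since each appeal there is to one of the moves H3, H3a, H3b, H3c, H2a, or H1, none of which is a shift move.

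The one point that must be audited carefully — and which I would flag as the main obstacle — is that every shift move invoked in the proof of Theorem~\ref{thm:invariant_z} is applied to the \emph{first} component of a derived Gauss phrase $p(w_i, X)$, because the first component is closed under mixed homotopy while the second is not. Inspecting the earlier proof, the shift moves appear in exactly three places: in the H2 case, the letter $A$ argument shifts the first component $ByB$; in the H3 case, the letter $A$ argument shifts the first component $Bx$, the letter $B$ argument shifts the first component $AxCAyC$, and the letter $C$ argument shifts the first component $Ay$. In each instance the shift is on the first component, so all these steps remain legal in mixed homotopy.

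Once this audit is complete, the conclusions $u_m(w_1,X) = u_m(w_2,X)$ for all letters $X$ (in the H1 and H3 cases) and the congruence $h(w_1) \equiv h(w_2) \pmod{2H}$ (in the H2 case) follow exactly as before, and applying $\phi$ gives $z_o(w_1) = z_o(w_2)$. Thus no genuinely new ideas are required: the result is a routine transfer of the earlier argument, with the sole verification being that the closed/open distinction imposed by mixed homotopy is compatible with the specific shift moves the earlier proof relied on.
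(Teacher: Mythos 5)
Your proposal is correct and is essentially the paper's own proof: the paper likewise observes that permutation of components and shift moves on the second component are used only in the (now omitted) shift-move case, and that all remaining shift moves in the proof of Theorem~\ref{thm:invariant_z} act on the first component, which stays closed under mixed homotopy. Your explicit audit of where those shift moves occur (H2 case for $A$; H3 cases for $A$, $B$, $C$) just makes the paper's one-line observation concrete.
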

\begin{proof}
The fundamental difference between the definitions of $z_o$ and $z$ is
 the type of homotopy we use to determine equivalence of Gauss phrases.
For $z$ we consider elements in $\gpset{2}$ whereas for $z_o$ we
 consider elements in $\gpmset{2}$.
Note that we can consider $\gpset{2}$ to be $\gpmset{2}$ modulo
 permutation of the two components and allowing shift moves on the
 second component. 
\par
Looking at the proof of the invariance of $z$
 (Theorem~\ref{thm:invariant_z}) we note the following two facts.
Firstly, the permutation of components is only used for the proof of
 invariance under the shift move.
Secondly, we only need to apply a shift move to the second component of
 a Gauss phrase in the proof of invariance under the shift move.
\par
Thus, by changing the notation appropriately and omitting the section
 about invariance under the shift move, the proof of
 Theorem~\ref{thm:invariant_z} becomes a proof of the invariance of
 $z_o$ under open homotopy.  
\end{proof}
We now give an example of a Gauss phrase which is trivial under homotopy
but non-trivial under open homotopy.
This shows that homotopy and open homotopy of Gauss phrases are
different.
\begin{ex}\label{ex:oh}
Let $w$ be the Gauss word $ABACDCBD$.
By an H3c move applied to $A$, $B$ and $C$, $w$ is homotopic to
 $BACADBCD$.
Applying a shift move we get $ACADBCDB$ which is homotopic to $ACAC$ by
 an H2a move. 
Applying another H2a move gives the empty Gauss word.
Thus $ABACDCBD$ is trivial under homotopy.
\par
We now calculate $z_o(w)$ to show that $w$ is not trivial under open
 homotopy.
We start by calculating the Gauss phrases for each letter.
We find that $p(w,A)$ is $B|CDCBD$, $p(w,B)$ is $ACDC|AD$, $p(w,C)$ is
 $D|ABABD$ and $p(w,D)$ is $CB|ABAC$.
Using an H2a move, we see that $p(w,C)$ is homotopic to $D|D$.
\par
We then calculate $S_m$ for each of the four Gauss phrases.
We find that
\begin{align*}
S_m(B|CDCBD)
&=
\begin{pmatrix}
\begin{pmatrix}
0 & 1 \\
\end{pmatrix},
\begin{pmatrix}
1 & 0 \\
0 & 1 \\
1 & 1 \\
\end{pmatrix}
\end{pmatrix}, \\
S_m(ACDC|AD)
&= 
\begin{pmatrix}
\begin{pmatrix}
0 & 0 \\
0 & 1 \\
\end{pmatrix},
\begin{pmatrix}
0 & 0 \\
\end{pmatrix}
\end{pmatrix}, \\
S_m(D|D)
&=
\begin{pmatrix}
\begin{pmatrix}
0 & 1 \\
\end{pmatrix},
\begin{pmatrix}
1 & 0 \\
\end{pmatrix}
\end{pmatrix} \\
\intertext{and}
S_m(CB|ABAC)
&= 
\begin{pmatrix}
\begin{pmatrix}
0 & 0 \\
\end{pmatrix},
\begin{pmatrix}
0 & 0 \\
1 & 0 \\
\end{pmatrix}
\end{pmatrix}.
\end{align*}
Therefore they are mutually distinct under mixed homotopy.
\par
Noting that $4t_m(w)$ is in the kernel of $\phi$, we have
\begin{equation*}
z_o(w) = 
\phi(\langle B|CDCBD \rangle) +
\phi(\langle ACDC|AD \rangle) +
\phi(\langle D|D \rangle) +
\phi(\langle CB|ABAC \rangle)
\ne 0,
\end{equation*}
where $\langle q \rangle$ represents the mixed homotopy equivalence
 class of a Gauss phrase $q$.
As $z_o(w)$ is not zero, we can conclude that $w$ is not trivial under
 open homotopy.
\end{ex}
\par
Note that we can define the height and base of
Gauss words under open homotopy in the same way as we did for Gauss
words under homotopy in Section~\ref{sec:coverings}.
We write $\height_o(w)$ for the height of $w$ and $\base_o(w)$ for the
base of $w$ under open homotopy.
For a given Gauss word $w$, $\height(w)$ and $\height_o(w)$ are not
necessarily the same.
For example, if $w$ is the Gauss word $ABACDCBD$ in Example~\ref{ex:oh},
$\height(w)$ is $0$ but $\height_o(w)$ is $1$.
We do not know whether there exists a Gauss word $w$ for which
$\base(w)$ and $\base_o(w)$ are different. 
\par
We end this section by remarking that there are an infinite
number of open homotopy classes of Gauss words.
This follows from Proposition~\ref{prop:infinite} and the fact that if
two Gauss words are not homotopic, they cannot be open homotopic. 
\section{Virtual knots}\label{sec:virtualknots}
We have shown that homotopy of Gauss words is non-trivial.
In this section we interpret this fact in terms of virtual knot
diagrams. 
We start by briefly recalling some definitions.
\par
A virtual knot diagram is an immersion of an oriented circle
in a plane with a finite number of self-intersections.
These self-intersections are limited to transverse double points.
We call them crossings.
There are three types of crossings which we draw differently in order to
distinguish them.
These crossing types are shown in Figure~\ref{fig:crossings}.
\begin{figure}[hbt]
\begin{center}
\includegraphics{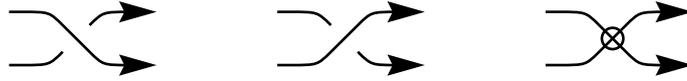}
\caption{The three types of crossing in a virtual knot
 diagram: real positive (left), real negative (middle) and virtual (right)}
\label{fig:crossings}
\end{center}
\end{figure}
\par
Virtual knots can be defined as the equivalence classes of
virtual knot diagram under a set of diagrammatic moves.
These moves include the Reidemeister moves of classical knot theory and
some other similar moves in which some crossings are virtual.
Definitions of all these moves are given, for example, in
\cite{Kauffman:VirtualKnotTheory}.
In this paper we collectively call these moves the generalized
Reidemeister moves.
\par
We define two moves on a single real crossing in a virtual
knot diagram. 
The first is called the crossing change.
It allows us to replace a positive real crossing with a negative one or
vice-versa.
This move is shown in Figure~\ref{fig:cc}.
\begin{figure}[hbt]
\begin{center}
\includegraphics{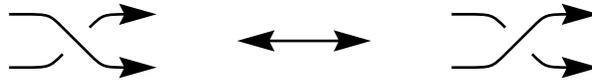}
\caption{The crossing change}
\label{fig:cc}
\end{center}
\end{figure}
\par
In classical knot theory this move gives an unknotting operation.
That is, any classical knot diagram can be reduced to a diagram with no
crossings by a sequence of Reidemeister moves and crossing changes.
On the other hand, this move is not an unknotting operation for virtual
knots. 
This is because considering virtual knots modulo this move is
equivalent to considering virtual strings and we know that non-trivial
virtual strings exist (see for example \cite{Turaev:2004}).
\par
The second move we define is called a virtual switch.
It is shown in Figure~\ref{fig:vflip}.
Kauffman first defined this move in \cite{Kauffman:VirtualKnotTheory}
and he used the name virtual switch for it in
\cite{Kauffman:Detecting}.
In \cite{Kauffman:VirtualKnotTheory} he showed that the involutary
quandle, a virtual knot invariant, is invariant even under virtual
switches. 
Since there exist virtual knots with different involutary quandles, we
may conclude that the virtual switch is not an unknotting operation for
virtual knots.
\begin{figure}[hbt]
\begin{center}
\includegraphics{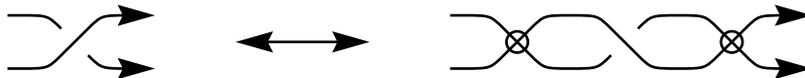}
\caption{The virtual switch}
\label{fig:vflip}
\end{center}
\end{figure}
\par
Note that both the crossing change and the virtual switch do not change
the Gauss word associated with the diagram.
The generalized Reidemeister moves do change the Gauss word associated with
the diagram.
However, the Gauss words before and after a generalized Reidemeister move
are equivalent under the moves we gave in Section~\ref{sec:gausswords}.
We have seen that there exist Gauss words that are not homotopically
trivial. 
Therefore, we can conclude that there exist virtual knots which cannot
be unknotted even if we allow the use of both the crossing change and
the virtual switch.
\par
In fact, by considering the nanoword representation of virtual knots
\cite{Turaev:KnotsAndWords}, it is easy to show that the set of homotopy
classes of Gauss words is equivalent to the set of virtual knots modulo
the crossing change and the virtual switch.
\bibliography{mrabbrev,gaussword}
\bibliographystyle{hamsplain}
\end{document}